\title{A note on random walks in a hypercube}
\author{Stanislav Volkov\thanks{
                  University of Bristol}
        \and
        Timothy Wong\thanks{
                      University of Bristol
                  }
        }
\def\P{{\mathbb{P}}}
\def\DD{\displaystyle}
\newtheorem{theorem}{Theorem}
\newtheorem{remark}{Remark}   
\newenvironment{proof}{{\sc Proof:}}{~\hfill QED}
\newenvironment{AMS}{}{}
\newenvironment{keywords}{}{}
\begin{document}
\newpage
\maketitle
\begin{abstract}
    We study a simple random walk on an $n$-dimensional hypercube.
    For any starting position we find the probability of hitting
    vertex $a$ before hitting vertex $b$, whenever $a$ and $b$ share the
    same edge. This generalizes the model in~\cite{DS} (see
    Exercise~1.3.7 there).
\end{abstract}

\begin{keywords}
   Random walks, electric networks, hypercube
\end{keywords}

\begin{AMS}
   60G50,  60J45
\end{AMS}

Consider an $n$-dimensional hypercube, that is a graph with $2^n$
vertices in the set $\{0,1\}^n$. A vertex $x$ of a hypercube can
be encoded by a sequence $x=(x_1,x_2,\dots,x_n)$ where each $x_i$
is either $0$ or $1$.

Two vertices $x=(x_1,x_2,\dots,x_n)$ and $y=(y_1,y_2,\dots,y_n)$
are connected by an edge, if and only if $\sum_{i=1}^n
|x_i-y_i|=1$, that is $x$ and $y$ differ in exactly one coordinate
(for example when $n=5$, $x=(0,1,0,1,0)$, and $y=(0,0,0,1,0)$\,).
For two vertices $x$ and $y$ the (graph) distance between them is
the quantity $|x-y|:=\sum_{i=1}^n |x_i-y_i|$, that is the smallest
number of edges on the path connecting $x$ and $y$.

A simple random walk on a hypercube is a particle which moves from
one vertex to another along the edges of this graph, with equal
probabilities. Since each vertex is connected by an edge to
exactly $n$ other vertices, the probability to go to any
particular neighbor equals $1/n$, and is independent of the past
movements. Such a walk has been fairly extensively studied,
especially its asymptotic properties, but we were not able to find
in the published materials the exact formula found by us in
Theorem~\ref{ContThrm}. Probably the most relevant references for
the walk on the hypercube would be~\cite{Diac}, \cite{DS}
and~\cite{Voit}, which of course does not expire the set of the
available literature on the topic.

Suppose we have two distinct vertices $a$ and $b$. Start a random
walk at some point $X_0=x$ and denote its position at time $n$
 as $X_n$. Assume that the walk stops when it hits
either $a$ or $b$. Our aim is to compute the probability that
$X_n$ hits $b$ before it hits $a$. Formally, if
$$
 \tau=\inf\{n\ge 0:\ X_n=a\mbox{ or }b\}
$$
we want to compute
$$
 p_{a,b}(x)=\P(X_{\tau}=b \ |\ X_0=x).
$$

Though we were not able to answer this question in general, we can
do it nonetheless in the case when $a$ and $b$ are immediate
neighbors, that is, connected by an edge. Without loss of
generality, assume from now on that
$$
a=(0,0,\dots,0,0)
$$
and
$$
b=(0,0,\dots,0,1).
$$
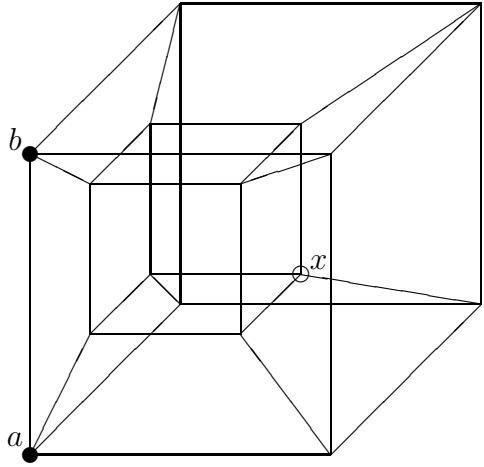
\begin{figure}[hbt]
     \centering
     \unitlength=0.4mm
     \begin{picture}(160.00,160.00)
          \put(0,0){\line(1,0){100.00}}
          \put(0,0){\line(0,1){100.00}}
          \put(100,100){\line(-1,0){100.00}}
          \put(100,100){\line(0,-1){100.00}}
          \put(50,50){\line(1,0){100.00}}
          \put(50,50){\line(0,1){100.00}}
          \put(150,150){\line(-1,0){100.00}}
          \put(150,150){\line(0,-1){100.00}}
          \put(0,0){\line(1,1){50}}
          \put(0,100){\line(1,1){50}}
          \put(100,0){\line(1,1){50}}
          \put(100,100){\line(1,1){50}}
          \put(20,40){\line(1,0){50}}
          \put(20,40){\line(0,1){50}}
          \put(70,90){\line(-1,0){50}}
          \put(70,90){\line(0,-1){50}}
          \put(40,60){\line(1,0){50}}
          \put(40,60){\line(0,1){50}}
          \put(90,110){\line(-1,0){50}}
          \put(90,110){\line(0,-1){50}}
          \put(20,40){\line(1,1){20}}
          \put(20,90){\line(1,1){20}}
          \put(70,40){\line(1,1){20}}
          \put(70,90){\line(1,1){20}}

%

          \put(0,0){\line(1,2){20}}
          \put(0,100){\line(2,-1){20}}
          \put(100,0){\line(-3,4){30}}
          \put(100,100){\line(-3,-1){30}}
          \put(50,50){\line(-1,1){10}}
          \put(50,150){\line(-1,-4){10}}
          \put(150,50){\line(-6,1){60}}
          \put(150,150){\line(-3,-2){60}}

          \put(0.00,0.00){\circle*{5.00}}
          \put(0.00,100.00){\circle*{5.00}}
          \put(90.00,60.00){\circle{5.00}}
          \put(-5.00,5.00){\makebox(0,0)[cc]{$a$}}
          \put(-5.00,105.00){\makebox(0,0)[cc]{$b$}}
          \put(96.00,64.00){\makebox(0,0)[cc]{$x$}}
     \end{picture}
     \caption{A $4$-dimensional hypercube. $a=(0,0,0,0)$,
     $b=(0,0,0,1)$ and $x=(1,1,1,0)$.
     \label{SelfdualFig}}
\end{figure}
See Figure~\ref{SelfdualFig} for a possible location of point $x$.

\begin{theorem}\label{ContThrm}
    Suppose we start a simple random walk on the hypercube from point
    $x=(x_1,x_2,\dots,x_n)$. Then the probability that this walk
    hits $b$ before $a$, is
    \begin{eqnarray*}\label{SplitFunc}
         p(x)=p_{a,b}(x) = \left\{ \begin{array}{rl}
                          \frac {\DD 1}{\DD 2} -\frac{\DD \sum_{i=k+1}^{n} {n\choose i}}{\DD 2(2^n-1){n-1\choose k}},
                              & \mbox{~if $x_n=0$;} \\ \\
                         \frac {\DD 1}{\DD 2} +
                         \frac{\DD \sum_{i=k+1}^{n} {n\choose i}}{\DD 2(2^n-1){n-1\choose k}},
                             & \mbox{~if $x_n=1$},
                         \end{array} \right.
   \end{eqnarray*}
   where $k=x_1+x_2+\dots+x_{n-1}$.
\end{theorem}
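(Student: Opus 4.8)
The plan is to exploit the large automorphism group of the hypercube to collapse the computation of $p$ to a one‑dimensional recurrence, and then to check that the stated formula solves it. Recall first that $p=p_{a,b}$ is characterized as the unique function on $\{0,1\}^n$ with $p(a)=0$, $p(b)=1$, and $p(x)=\frac1n\sum_{y\sim x}p(y)$ for every $x\neq a,b$ (existence via the probabilistic representation $p(x)=\P_x(X_\tau=b)$, uniqueness by the maximum principle for harmonic functions on a finite connected graph; see~\cite{DS}). Now two symmetries. Every permutation of the first $n-1$ coordinates is a graph automorphism fixing both $a$ and $b$, so $p(x)$ depends on $x$ only through $x_n$ and $k:=x_1+\dots+x_{n-1}$; write $p(x)=f(k,x_n)$, with $k\in\{0,\dots,n-1\}$. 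Secondly, the map $\sigma$ that flips the last coordinate is an automorphism interchanging $a$ and $b$, so $p(x)=1-p(\sigma x)$, i.e. $f(k,0)+f(k,1)=1$. Putting $g(k):=f(k,1)-\tfrac12=\tfrac12-f(k,0)$, the whole problem reduces to finding $g(0),\dots,g(n-1)$, and the boundary value $f(0,0)=p(a)=0$ forces $g(0)=\tfrac12$.

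Next I would write down harmonicity in these coordinates. A vertex with $x_n=0$ and coordinate sum $k\ (\ge 1)$ has exactly $k$ neighbours in class $(k-1,0)$, exactly $n-1-k$ in class $(k+1,0)$, and exactly one in class $(k,1)$; feeding $f(k,x_n)=\tfrac12\pm g(k)$ into the harmonicity relation $n f(k,0)=k f(k-1,0)+(n-1-k)f(k+1,0)+f(k,1)$, the constants and all the $\tfrac12$'s cancel and one is left with the homogeneous recurrence
\begin{eqnarray*}
 (n+1)\,g(k)=k\,g(k-1)+(n-1-k)\,g(k+1),\qquad k=1,\dots,n-1,
\end{eqnarray*}
where the last term is simply absent when $k=n-1$. (Harmonicity at vertices with $x_n=1$ yields the same equations, using $f(k,1)=1-f(k,0)$, so nothing new arises; and every non‑boundary vertex has $k\ge1$, so these relations account for harmonicity everywhere off $\{a,b\}$.) Since $a$ and $b$ are exactly the classes $(0,0)$ and $(0,1)$, it now suffices to exhibit $g(0),\dots,g(n-1)$ solving this recurrence with $g(0)=\tfrac12$: the corresponding function $q(x)=\tfrac12\pm g(k)$ then satisfies all the defining properties of $p$, hence equals it. (The recurrence plus $g(0)=\tfrac12$ is in fact a strictly diagonally dominant tridiagonal system, so its solution is unique, but we do not even need this, only the uniqueness of $p$ itself.)

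Finally, the verification. Writing $S_k:=\sum_{i=k+1}^n\binom ni$, so that the claim is $g(k)=S_k\big/\big(2(2^n-1)\binom{n-1}{k}\big)$, the elementary identities $\binom{n-1}{k-1}=\frac{k}{n-k}\binom{n-1}{k}$ and $\binom{n-1}{k+1}=\frac{n-1-k}{k+1}\binom{n-1}{k}$ turn the recurrence (after cancelling the common factor $\tfrac1{2(2^n-1)}$ and multiplying by $\binom{n-1}{k}$) into the clean form $(n+1)S_k=(n-k)S_{k-1}+(k+1)S_{k+1}$. Substituting $S_{k-1}=S_k+\binom nk$ and $S_{k+1}=S_k-\binom n{k+1}$, the $S_k$ terms combine to $(n+1)S_k$ and the remainder collapses to the single identity $(n-k)\binom nk=(k+1)\binom n{k+1}$, which is immediate; the degenerate case $k=n-1$ is the instance with $S_n=0$. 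The boundary value $g(0)=\tfrac12$ is just $\sum_{i=1}^n\binom ni=2^n-1$. The two branches of the formula (for $x_n=0$ and $x_n=1$) then come from $f(k,0)=\tfrac12-g(k)$ and $f(k,1)=\tfrac12+g(k)$, completing the proof.

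I do not expect a genuinely hard step: the conceptual content is entirely in the two symmetry reductions, and the combinatorial identity is a one‑line cancellation as just indicated. The only real care needed is bookkeeping — getting the neighbour counts right, tracking the $x_n=0$ versus $x_n=1$ cases, and handling the degenerate endpoint $k=n-1$ (where the coefficient $n-1-k$ vanishes and $g(n)$ would otherwise appear) — together with confirming that $a$ and $b$ are precisely the two classes excluded from the harmonicity equations.
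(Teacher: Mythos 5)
Your proposal is correct, and its first half coincides with the paper's: the paper also uses the harmonic/electric characterization of $p$ (via~\cite{DS}), the same two symmetries (permutations of the first $n-1$ coordinates, and the flip of the last coordinate giving $\tilde w_k=1-w_k$), and arrives at exactly your recurrence in the form $w_k=\frac{kw_{k-1}+(n-k-1)w_{k+1}+1}{n+1}$ with the same centering $w_k=\frac12-u_k$. Where you diverge is in how the recurrence is resolved. The paper \emph{derives} the solution: it guesses and proves by induction the first-order relation $iu_{i-1}=nu_{n-1}+(n-i)u_i$, substitutes $z_i=(i+1)\cdots(n-1)u_i$, iterates to get $z_{n-j-1}=cj!\sum_{l=0}^{j}{n\choose j-l}$, and fixes the constant from $u_0=1/2$. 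You instead \emph{verify} that the stated closed form satisfies the recurrence, reducing it to $(n+1)S_k=(n-k)S_{k-1}+(k+1)S_{k+1}$ and then to the one-line identity $(n-k){n\choose k}=(k+1){n\choose k+1}$, and you invoke uniqueness of the harmonic function with the given boundary values to conclude. Both are complete proofs of the theorem as stated; your route is shorter and the endpoint case $k=n-1$ is handled cleanly by $S_n=0$, but it only works because the formula is handed to you, whereas the paper's longer computation actually produces the answer. One small point worth making explicit if you write this up: the uniqueness you cite (maximum principle on a finite connected graph) is genuinely needed, since you construct a candidate function rather than solving the boundary value problem.
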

\begin{proof}
As it is noted in~\cite{DS}, instead of finding the probability
$p(x)$ we can solve a seemingly completely different problem from
electric networks as follows. Suppose that each edge of the
hypercube is replaced by a unit resistor. Attach a 1 volt battery
to points $a$ and $b$, such that the voltage at $a$, denoted as
$v(a)$, equals $0$, and the voltage at $b$ is $v(b)=1$. Then the
voltage at vertex $x$, denoted by $v(x)$ is equal exactly to  the
unknown probability $p(x)$.

Thus, to solve our problem, we will use the language and the
methods borrowed from electric network theory. For example, if two
or more resistors are connected in series, they may be replaced by
a single resistor whose resistance is the sum of their
resistances. Also, $m$ ($\ge 2$) resistors in parallel may be
replaced by a single resistor with resistance $R$ equal to
$1/(R_1^{-1}+\dots+R_m^{-1})$ where $R_1,\dots,R_m$ are the
resistances of the original resistors.

From the symmetry it follows that the voltage at all vertices of
the cube lying in the set
$$
W_k:=\{x=(x_1,x_2,\dots,x_{n-1},0) \mbox{ such that }
 x_1+x_2+\dots+x_{n-1}=k \}
$$
is the same and depends on $k$ only. Let us denote this voltage
$w_k$. Similarly, the voltage at all vertices in
$$
W_k:=\{x=(x_1,x_2,\dots,x_{n-1},1) \mbox{ such that }
 x_1+x_2+\dots+x_{n-1}=k \}
$$
is also the same, let us denote this voltage $\tilde w_k$.
Moreover, from symmetry, the probability to hit $a=(0,\dots,0,0)$
before $b=(0,\dots,0,1)$ starting from $(x_1,x_2,\dots,x_{n-1},0)$
is the same as the probability to hit $b$ before $a$ starting from
$(x_1,x_2,\dots,x_{n-1},1)$, we obtain
\begin{equation}\label{eqtil}
\tilde w_k=1-w_k, \mbox{ for } k=0,1,\dots,n-1.
\end{equation}

Obviously, we have
\begin{equation}\label{eq0}
w_0=0.
\end{equation}
Now for $1\le k \le n-1$, the vertex
$x=(x_1,x_2,\dots,x_{n-1},0)\in W_k$ is connected to $n$ vertices
\begin{eqnarray*}
 &(1-x_1&,x_2,\dots,x_{n-1},0)\\
 (x_1,&1- x_2&,\dots,x_{n-1},0)\\
 &\dots&\\
 (x_1,x_2,\dots,&1-x_{n-1}&,0)\\
 (x_1,x_2,\dots,x_{n-1},&1)&\\
\end{eqnarray*}
where the last one lies in $\tilde W_k$, and among the first $n-1$
vertices $k$ lie in $W_{k-1}$ and $n-1-k$ in $W_{k+1}$ (since
there are exactly $k$ ones and $n-1-k$ zeros in the set
$\{x_1,x_2,\dots,x_{n-1}\}$). From Kirchhoff's and Ohm's Laws,
stating that the sum of all currents from a vertex is zero, and
the current that flows through an edge equals the difference in
voltages divided by the resistance of that edge (which are all one
in our case), we conclude that
$$
 k\times\frac{w_{k-1}-w_k}1+(n-1-k)\times\frac{w_{k+1}-w_k}1+ \frac{\tilde w_k-w_k}1 =0
$$
whence taking into account (\ref{eqtil})
\begin{equation}\label{eqk}
 w_k=\frac{k w_{k-1}+(n-k-1)w_{k+1}+1}{n+1}, \ k=1,2,\dots,n-2.
\end{equation}
Additionally, for $k=n-1$, we obtain in the same way
\begin{equation}\label{eqn-1}
 w_{n-1}=\frac{(n-1)w_{n-2}+1}{n+1}.
\end{equation}
Thus we have to solve the system of equations~(\ref{eq0}),
(\ref{eqk}), and~(\ref{eqn-1}). To this end, first set $w_k=\frac
12-u_k$ for all $k$, then our system becomes
\begin{eqnarray}\label{equ}
 u_0&=&1/2,\\
 u_k&=&\frac{k u_{k-1}+(n-k-1)u_{k+1}}{n+1}, \ k=1,2,\dots,n-1,
 \nonumber
\end{eqnarray}
with the additional condition $u_{n}=0$ (its value does not matter
anyway since it is multiplied by $0$ for $k=n-1$). Note that
intuitively we must end up with $u_k\ge 0$, since every vertex
$x\in W_k$ is closer to $a$ than to $b$. Thus the probability
$w_k$ to hit $b$ before $a$ should not exceed $\frac 12$.

We can rewrite system~(\ref{equ}) as
\begin{eqnarray}\label{equ1}
u_0&=&1/2, \\
u_{k-1}&=&\frac{{(n+1) u_{k}-(n-k-1)u_{k+1}}}{k}, \
k=1,2,\dots,n-1. \nonumber
\end{eqnarray}
Solving system~(\ref{equ1}) backwards, we obtain
\begin{eqnarray*}
u_{n-2} &=&\frac{n+1}{n-1} u_{n-1} ,\\
u_{n-3} &=&\frac{n^2+n+2}{(n-1)(n-2)} u_{n-1}\\
u_{n-4} &=&\frac{n^3+5n+6}{(n-1)(n-2)(n-3)} u_{n-1}
\end{eqnarray*}
etc. With some guessing, one can notice that
\begin{eqnarray}\label{eqind}
i u_{i-1}= n u_{n-1}+ (n-i) u_i
\end{eqnarray}
for $i=n,n-1,n-2,n-3$. Let us prove by induction that this is true
for all $i=1,\dots,n$. Indeed, we already know that~(\ref{eqind})
holds for $i=n,\dots,n-3$. Suppose that~(\ref{eqind}) holds for
$i=k+1,k+2,\dots,n$. Let us establish~(\ref{eqind}) for $i=k$.
Indeed, from~(\ref{equ1}), plugging in (\ref{eqind}) with $i=k+1$,
we obtain
$$
u_{k-1}=\frac{{(n+1) u_{k}-(n-k-1)u_{k+1}}}{k}
 =\frac{{(n+1) u_{k}-[(k+1) u_k-n u_{n-1}}]  }{k}
$$
yielding $k u_{k-1}= n u_{n-1}+ (n-k) u_k$ and thus completing the
induction.

In the next step we want to compute $u_i$ as a function of
$u_{n-1}$. Let us denote $u_{n-1}=c$ and substitute
$$
z_i=(i+1)(i+2)\dots (n-1) u_i
$$
into (\ref{eqind}). Then we have $z_{n-2}=(n+1) c$ and
$$
z_{i-1}=n(n-1)\dots(i+1)c+(n-i) z_i
$$
which after reiterations gives
 \begin{eqnarray*}
  \begin{array}{rclcl}
  z_{n-j-1}/c&=&n(n-1)\dots(n-j+1)&\times& 1
  \\&+& n(n-1)\dots(n-j+2)&\times &j \\
 &+& n(n-1)\dots(n-j+3)&\times&  j(j-1)\\
  &+&\dots\\
 &+&n(n-1)&\times& j(j-1)\dots 3 \\
 &+&n&\times& j(j-1)\dots 3\cdot 2 \\
 &+&1&\times& j(j-1)\dots 3\cdot 2\cdot 1,
 \end{array}
\end{eqnarray*}
that is,
\begin{eqnarray*}
  z_{n-j-1}&=&c\sum_{l=0}^{j} \frac{n!}{(n-j+l)!}\frac{j!}{(j-l)!}
   =cj!\sum_{l=0}^{j} {n \choose j-l}.
\end{eqnarray*}
Recalling that $u_0=1/2$ gives $z_0=(n-1)!/2$ whence
\begin{eqnarray*}
  \frac{(n-1)!}{2}&=&z_{0}
  =c(n-1)!\sum_{l=0}^{n-1} {n \choose n-1-l}
  =c(n-1)!\sum_{l=0}^{n-1} {n \choose l+1}\\
  &=&c(n-1)!\sum_{l=1}^{n} {n \choose l}
  =c(n-1)!\left(2^n-1\right).
\end{eqnarray*}
Therefore, $c=1/(2^{n+1}-2)$,
\begin{eqnarray*}
z_k=\frac{(n-1-k)!\sum_{l=0}^{n-1-k} {n \choose
n-1-k-l}}{2^{n+1}-2}
\end{eqnarray*}
and
\begin{eqnarray*}
u_k= \frac{k!}{(n-1)!}z_k
 =\frac{\sum_{l=0}^{n-1-k}{n \choose n-1-k-l}}{{n-1\choose k}\left(2^{n+1}-2\right)}
 =\frac{\sum_{i=k+1}^{n} {n \choose i}}{{n-1\choose
 k}\left(2^{n+1}-2\right)},
\end{eqnarray*}
Recalling that $w_k=1/2-u_k$ and hence $\tilde w_k=1/2+u_k$
finishes the proof.
\end{proof}
\begin{remark}
If $a=(0,0,\dots,0)$ and $b=(1,1,\dots,1)$, so that the points $a$
and $b$ are the furthermost points of the hypercube, one can
compute $p_{a,b}(x)$ very easily (we leave this as an exercise).
However,  with the exception of the two cases when $|a-b|=1$ and
$|a-b|=n$ we do not know a general formula for $p_{a,b}(x)$.
\end{remark}

\begin{remark}
On the other hand, the formula for the probability that the walk
started at $a=(0,0,\dots,0)$ is located at vertex
$x=(x_1,x_2,\dots,x_n)$ at time $N$ with $|x|=\sum_{i=1}^n x_i=k$
is known and given by formula~(3.1) in~\cite{Diac}:
\begin{eqnarray*}
\frac 1{2^n} \sum_{j=0}{n} \left[1- \frac{2j}{n+1}\right]^N
\sum_{i=0}^{k}(-1)^i {k \choose i}{n-k\choose j-i}.
\end{eqnarray*}
\end{remark}

\end{document}